\documentclass[11pt]{article}

\usepackage[T1]{fontenc}
\usepackage{lmodern}
\usepackage[margin=1in]{geometry}
\usepackage{amsmath,amssymb,amsthm}
\usepackage{mathtools}
\usepackage{microtype}
\usepackage{enumitem}
\usepackage{aliascnt}
\usepackage{tikz}
\usepackage[hidelinks]{hyperref}
\usepackage[nameinlink,capitalize,noabbrev]{cleveref}

\newtheorem{theorem}{Theorem}[section]

\newaliascnt{lemma}{theorem}
\newtheorem{lemma}[lemma]{Lemma}
\aliascntresetthe{lemma}

\newaliascnt{proposition}{theorem}
\newtheorem{proposition}[proposition]{Proposition}
\aliascntresetthe{proposition}

\newaliascnt{corollary}{theorem}
\newtheorem{corollary}[corollary]{Corollary}
\aliascntresetthe{corollary}

\theoremstyle{definition}
\newaliascnt{definition}{theorem}
\newtheorem{definition}[definition]{Definition}
\aliascntresetthe{definition}

\newaliascnt{example}{theorem}
\newtheorem{example}[example]{Example}
\aliascntresetthe{example}

\theoremstyle{remark}
\newaliascnt{remark}{theorem}

\aliascntresetthe{remark}

\newaliascnt{question}{theorem}
\newtheorem{question}[question]{Question}
\aliascntresetthe{question}

\newcommand{\B}{\mathcal{B}}
\newcommand{\dcl}[1]{\operatorname{cl}_{\diamond,#1}}
\newcommand{\E}{E}
\newcommand{\botstar}{S_{\mathrm{bot}}}
\newcommand{\topstar}{S_{\mathrm{top}}}

\title{Cographs and Minimum Diamond-Generating Edge Sets in Boolean Lattices}
\author{Mahesh Ramani\thanks{Email: \href{mailto:mahesh.ramani.iyer@gmail.com}{\texttt{mahesh.ramani.iyer@gmail.com}}}}
\date{}

\hypersetup{
  pdftitle={Cographs and Minimum Diamond-Generating Edge Sets in Boolean Lattices},
  pdfauthor={Mahesh Ramani},
  pdfsubject={Combinatorics and graph theory},
  pdfkeywords={Boolean lattice, diamond closure, cograph, cover graph, graph twins, generating set}
}

\begin{document}

\maketitle

\begin{abstract}
We study a local closure operation on the cover edges of a Boolean lattice:
whenever the two lower edges or the two upper edges of a square face are
present, all four edges of that square are added. We prove that every set of
cover edges generating the full cover graph of \(\B_n\) has cardinality at
least \(n\), and we classify all generators attaining this bound. For a graph
\(G\) on \([n]\), let
\[
S_G=\{N_G(i)\to N_G(i)\cup\{i\}:i\in[n]\}.
\]
Then \(S_G\) diamond-generates the full cover graph if and only if \(G\) is a
cograph, and every minimum-cardinality generator arises uniquely in this way.
Consequently, labeled minimum diamond-generating sets of \(\B_n\) are in
bijection with labeled cographs on \(n\) vertices.
\end{abstract}

\medskip
\noindent\textbf{Keywords.}
Boolean lattice; diamond closure; cograph; cover graph; graph twins; generating
set.

\section{Introduction}

A diamond in the cover graph of a lattice is a square formed by two successive
cover relations in either order. If two adjacent edges on one side of the
square are known, a natural completion rule is to add the two edges on the
other side. This diamond operation appears in the study of rational relations
among pseudo-roots of noncommutative polynomials; in particular, Retakh and
Saks study diamond closure in cover graphs of modular lattices
\cite{RetakhSaks2021}. The present paper isolates a finite extremal question for
the Boolean lattice.

Let \(\B_n\) denote the Boolean lattice of subsets of \([n]\). We ask:
what is the smallest number of cover edges from which every cover edge of
\(\B_n\) can be obtained by repeated diamond completions, and what do all
smallest generating sets look like? The answer is governed by cographs, the
class of graphs generated from single vertices by disjoint union and join
\cite{CorneilLerchsBurlingham1981}. Cographs are also characterized as the
finite graphs with no induced four-vertex path \cite{Seinsche1974}.

For a graph \(G\) on a finite vertex set \(X\), write \(N_G(i)\) for the open
neighborhood of \(i\), and associate to \(G\) the set of cover edges
\[
S_G=\{N_G(i)\to N_G(i)\cup\{i\}:i\in X\}.
\]
Our main result is the following.

\begin{theorem}[Main classification]\label{thm:main}
Let \(X\) be a finite nonempty set.
\begin{enumerate}[label=\textup{(\roman*)}]
  \item Every diamond-generating set of cover edges of \(\B_X\) has
  cardinality at least \(|X|\), and this bound is attained.
  \item For a graph \(G\) on \(X\), the set \(S_G\) diamond-generates
  \(\B_X\) if and only if \(G\) is a cograph.
  \item Every diamond-generating set of cardinality \(|X|\) is equal to
  \(S_G\) for a unique cograph \(G\) on \(X\).
\end{enumerate}
\end{theorem}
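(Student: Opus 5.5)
The plan rests on three structural observations: a counting invariant, a projection lemma, and order-reversing duality. Each cover edge \(A\to A\cup\{i\}\) of \(\B_X\) has a \emph{direction} \(i\). A diamond completion on a square with directions \(i,j\) needs edges of directions \(i\) and \(j\) already present, and it only adds edges of those directions. By induction on the number of completions, the closure of \(S\) contains only directions already occurring in \(S\). So a generator needs at least one edge of each direction, which gives \(|S|\ge |X|\). Attainment will follow from (ii) applied to the edgeless graph. In that case \(S_G=\{\emptyset\to\{i\}\}\), and lower completions applied level by level produce every edge.

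\textbf{Projection lemma.} For \(Y\subseteq X\), let \(\pi(A)=A\cap Y\). Let \(\pi_Y(S)\) be the set of images of the edges of \(S\) whose direction lies in \(Y\); these are cover edges of \(\B_Y\). I claim \(\pi_Y(\operatorname{cl}(S))\subseteq \operatorname{cl}(\pi_Y(S))\). Check this one completion at a time:
\begin{itemize}
  \item a square with both directions in \(Y\) maps to a square of \(\B_Y\), and the completion rule commutes with \(\pi\);
  \item a square with exactly one direction \(i\in Y\) maps its two \(i\)-edges to the same edge, so the new \(i\)-edge has the image of an old one;
  \item a square with no direction in \(Y\) contributes nothing.
\end{itemize}
Hence if \(S\) generates \(\B_X\), then \(\pi_Y(S)\) generates \(\B_Y\).

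\textbf{Proof of (iii) from the projection lemma.} A minimum generator has exactly one edge \(A_i\to A_i\cup\{i\}\) per direction. Take \(Y=\{i,j\}\). Then \(\pi_Y(S)\) consists of two edges, of directions \(i\) and \(j\), and it must generate the four-edge square \(\B_2\). This forces both edges to be the lower pair or both to be the upper pair. That is, \(j\in A_i\iff i\in A_j\). So \(A_i=N_G(i)\) for a well-defined graph \(G\), and \(S=S_G\). The graph is unique because \(G\) is read off from the \(A_i\), and \(G\) is a cograph by (ii).

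\textbf{Proof of (ii).}

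Necessity. \(\pi_Y(S_G)=S_{G[Y]}\), since \(N_G(i)\cap Y=N_{G[Y]}(i)\) for \(i\in Y\). So if \(G\) contains an induced \(P_4\) on \(Y\), it suffices to show that \(S_{P_4}\) does not generate \(\B_4\). This is a finite computation. Label the path \(1\!-\!2\!-\!3\!-\!4\). The four edges are \(\{2\}\to\{1,2\}\), \(\{1,3\}\to\{1,2,3\}\), \(\{2,4\}\to\{2,3,4\}\), \(\{3\}\to\{3,4\}\). I would compute the closure by hand, or exhibit a completion-stable proper family of edges containing these four. I expect this to be the main obstacle. First I would check whether any two of these edges share a bottom or a top vertex, and then follow the few completions available until the process stabilizes strictly below all of \(\B_4\).

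Sufficiency. Use the recursive description of cographs. Complementation \(A\mapsto X\setminus A\) is an order-reversing automorphism of \(\B_X\). It preserves the diamond rule because it swaps lower and upper pairs. It sends the edge \(N_G(i)\to N_G(i)\cup\{i\}\) to the edge \(N_{\bar G}(i)\to N_{\bar G}(i)\cup\{i\}\). Hence \(S_G\) generates \(\B_X\) iff \(S_{\bar G}\) does. Since cographs are closed under complement, the join case reduces to the disjoint-union case.

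For \(G=G_1\sqcup G_2\) on \(X_1\sqcup X_2\), neighborhoods stay inside the parts. By induction, \(S_{G_1}\) generates every edge of the sublattice \(\B_{X_1}\), viewed as the sets \(A\) with \(A\subseteq X_1\); similarly \(S_{G_2}\) generates every edge of \(\B_{X_2}\). It remains to show that these two "axis" sublattices generate every edge \(A_1\cup A_2\to A_1\cup A_2\cup\{i\}\). I would do this by induction on \(|A_1|+|A_2|\), applying lower completions to the edges leaving \(A_1\cup A_2\minus\) one element. The base case \(\emptyset\to\{i\}\) and \(\emptyset\to\{j\}\) is already present, and each lower completion moves one step up in the product.
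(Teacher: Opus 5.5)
Your proposal is correct. Part (i), the projection lemma, and the two-coordinate symmetry argument for (iii) are essentially the paper's. Your proof of (ii), however, takes a different route in both directions.

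For sufficiency, the paper handles the join case directly. The seeds in directions \(X_a\) form a copy of \(S_{G_a}\) in the top face \(F_a^1\), so induction yields the top star \(\topstar\), mirroring how the union case yields \(\botstar\). You observe instead that \(A\mapsto X\setminus A\) swaps lower and upper halves and carries \(S_G\) onto \(S_{\bar G}\). This collapses the two cases into one and explains why they are mirror images.

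For necessity, the paper does not pass through \(P_4\). It notes that a seed set which expands at all must already contain a half-diamond in round one. Two neighborhood seeds form a lower (upper) half exactly when their vertices are false (true) twins. It then applies the hereditary twin characterization of cographs, which it proves in full. Your reduction to an induced \(P_4\) is shorter, but it rests on the nontrivial direction of Seinsche's theorem (every \(P_4\)-free graph is built by unions and joins), which the paper only cites. The paper's twin route is self-contained and identifies the mechanism that makes generation fail.

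Finally, the step you flagged as the main obstacle is immediate. The lower endpoints \(\{2\},\{1,3\},\{2,4\},\{3\}\) of the four seeds are pairwise distinct, as are the upper endpoints \(\{1,2\},\{1,2,3\},\{2,3,4\},\{3,4\}\). So no lower or upper half lies in \(S_{P_4}\), and the closure never grows beyond these four edges; this is exactly \cref{ex:p4}.
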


The lower bound follows because a diamond completion cannot introduce a
coordinate direction that was absent from the initial set. A generator of
minimum cardinality therefore contains exactly one edge in each direction. A
coordinate-projection lemma then forces a pairwise symmetry among the lower
endpoints of those edges, so they are precisely the open neighborhoods of a
simple graph. Cographs generate by induction through their disjoint-union and
join constructions. Conversely, generation passes to every induced subgraph,
while any seed set that begins to expand must contain two edges forming one
half of a diamond; for neighborhood seeds, such a pair is exactly a pair of
false or true twins. The hereditary twin characterization of cographs then
completes the proof.

Throughout, graphs are finite, simple, and undirected. The word
\emph{minimum} refers to minimum cardinality, rather than inclusion-minimality.

\section{Diamond closure and coordinate projection}

For a finite set \(X\), let \(\B_X=2^X\) be the Boolean lattice ordered by
inclusion. When \(X=[n]=\{1,\dots,n\}\), we also write \(\B_n\).

A cover edge of \(\B_X\) has the form
\[
e_i(A): A\longrightarrow A\cup\{i\},
\qquad A\subseteq X\setminus\{i\}.
\]
The element \(i\) is called the \emph{direction} of the edge. We write
\(\E(\B_X)\) for the full set of cover edges.

A diamond is determined by a set \(A\subseteq X\) and distinct elements
\(i,j\in X\setminus A\). Its lower and upper halves are, respectively,
\[
L(A;i,j)=\{e_i(A),e_j(A)\}
\]
and
\[
U(A;i,j)=\{e_i(A\cup\{j\}),e_j(A\cup\{i\})\}.
\]
Thus its four-edge set is \(L(A;i,j)\cup U(A;i,j)\).

\begin{center}
\begin{tikzpicture}[scale=1.05,baseline=(current bounding box.center)]
  \node (top) at (0,1.8) {$A\cup\{i,j\}$};
  \node (left) at (-1.7,0) {$A\cup\{i\}$};
  \node (right) at (1.7,0) {$A\cup\{j\}$};
  \node (bottom) at (0,-1.8) {$A$};
  \draw[->] (bottom) -- node[below left] {$i$} (left);
  \draw[->] (bottom) -- node[below right] {$j$} (right);
  \draw[->] (left) -- node[above left] {$j$} (top);
  \draw[->] (right) -- node[above right] {$i$} (top);
  \node at (0,-0.85) {lower $V$};
  \node at (0,0.85) {upper $V$};
\end{tikzpicture}
\end{center}

\begin{definition}[Synchronous diamond closure]\label{def:closure}
Let \(S\subseteq \E(\B_X)\). Define an increasing sequence
\(S^{(0)},S^{(1)},\dots\) by \(S^{(0)}=S\) and by letting \(S^{(t+1)}\) be
\(S^{(t)}\) together with all four edges of every diamond for which either
\(L(A;i,j)\subseteq S^{(t)}\) or \(U(A;i,j)\subseteq S^{(t)}\). The
\emph{diamond closure} of \(S\) is
\[
\dcl{X}(S)=\bigcup_{t\ge 0}S^{(t)}.
\]
We call \(S\) \emph{diamond-generating} if
\(\dcl{X}(S)=\E(\B_X)\).
\end{definition}

Because \(\E(\B_X)\) is finite, the sequence in \cref{def:closure} stabilizes.
Its stable value is equivalently the smallest set of cover edges containing
\(S\) that is closed under diamond completion.

We next record the projection property that will be used twice in the proof.
For \(Y\subseteq X\), define the coordinate projection of an edge in a retained
direction by
\[
\pi_Y(e_i(A))=e_i(A\cap Y),\qquad i\in Y,
\]
and discard edges whose direction is not in \(Y\). For a set \(T\) of edges,
let \(\pi_Y(T)\) be the set of all retained projected edges.

\begin{lemma}[Coordinate projection]\label{lem:projection}
For every \(S\subseteq \E(\B_X)\) and every \(Y\subseteq X\),
\[
\pi_Y\bigl(\dcl{X}(S)\bigr)
\subseteq
\dcl{Y}\bigl(\pi_Y(S)\bigr).
\]
Consequently, if \(S\) diamond-generates \(\B_X\), then \(\pi_Y(S)\)
diamond-generates \(\B_Y\).
\end{lemma}

\begin{proof}
Let \(S^{(t)}\) be the closure sequence in \(\B_X\), and let \(T^{(t)}\) be
the closure sequence in \(\B_Y\) starting from \(T^{(0)}=\pi_Y(S)\). We prove
by induction on \(t\) that
\[
\pi_Y(S^{(t)})\subseteq T^{(t)}.
\]
The assertion is immediate for \(t=0\).

Consider a diamond used in passing from \(S^{(t)}\) to \(S^{(t+1)}\), with
directions \(i\) and \(j\). If \(i,j\in Y\), its four edges project to the
four edges of a diamond in \(\B_Y\), and the triggering lower or upper \(V\)
projects to the corresponding triggering \(V\). Hence every projected edge
added by this completion belongs to \(T^{(t+1)}\).

If exactly one of \(i,j\) lies in \(Y\), the two edges in the retained
direction project to the same edge of \(\B_Y\). Completing the diamond
therefore produces no new projected edge: the projection of the newly added
retained-direction edge is already the projection of the retained-direction
edge in the triggering \(V\). If neither direction lies in \(Y\), all four
edges are discarded. These cases establish the induction step.

Taking the union over \(t\) gives the stated inclusion. If \(S\) generates
\(\B_X\), then
\[
\E(\B_Y)
=\pi_Y(\E(\B_X))
=\pi_Y\bigl(\dcl{X}(S)\bigr)
\subseteq \dcl{Y}\bigl(\pi_Y(S)\bigr),
\]
and the reverse inclusion in \(\E(\B_Y)\) is automatic.
\end{proof}

\section{Minimum size and graph encoding}

\begin{lemma}[Direction support]\label{lem:directions}
If no edge of \(S\subseteq \E(\B_X)\) has direction \(i\), then no edge of
\(\dcl{X}(S)\) has direction \(i\).
\end{lemma}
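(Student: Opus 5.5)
The plan is to prove the claim by induction on the stages \(S^{(t)}\) of the synchronous closure sequence from \cref{def:closure}. The invariant is that no edge of \(S^{(t)}\) has direction \(i\), and the claim follows by taking the union over \(t\).

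The base case \(t=0\) is exactly the hypothesis on \(S\). For the inductive step, suppose \(S^{(t)}\) contains no edge of direction \(i\), and consider any diamond with base \(A\) and directions \(j,k\) whose completion is used to pass from \(S^{(t)}\) to \(S^{(t+1)}\).

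\begin{itemize}
  \item The trigger is either the lower half \(L(A;j,k)=\{e_j(A),e_k(A)\}\) or the upper half \(U(A;j,k)=\{e_j(A\cup\{k\}),e_k(A\cup\{j\})\}\).
  \item Each of these halves contains one edge of direction \(j\) and one of direction \(k\).
  \item Since that half lies in \(S^{(t)}\), the inductive hypothesis gives \(j\neq i\) and \(k\neq i\).
  \item The four edges added by the completion have directions only \(j\) and \(k\), so none of them has direction \(i\).
\end{itemize}

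Hence \(S^{(t+1)}\), being \(S^{(t)}\) together with such edges, still contains no edge of direction \(i\).

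There is no real obstacle here. The only point to state carefully is that the two directions occurring in a diamond's lower half coincide with those in its upper half, and with those of all four of its edges. This is immediate from the definitions of \(L(A;j,k)\) and \(U(A;j,k)\).

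Alternatively, one could derive the statement from \cref{lem:projection} with \(Y=\{i\}\). In that case \(\pi_Y(S)=\emptyset\), the closure in \(\B_{\{i\}}\) of the empty set is empty, and so the projection of \(\dcl{X}(S)\) is empty, meaning no edge of direction \(i\) survives. I would present the direct induction as the main proof and mention this projection argument as a one-line remark.
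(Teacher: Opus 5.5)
Your proposal is correct and follows the paper's proof: induction on the closure rounds, using that each lower and upper half contains exactly one edge in each of the diamond's two directions, so a completion can only add direction-\(i\) edges if its triggering half already contained one. Your alternative via \cref{lem:projection} with \(Y=\{i\}\) is also valid and not circular, since that lemma's proof does not use direction support.
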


\begin{proof}
Every diamond has two directions, and each of its lower and upper halves
contains one edge in each direction. Thus a diamond completion can produce a
new edge in direction \(i\) only when its triggering half already contains an
edge in direction \(i\). Induction on the closure rounds proves the claim.
\end{proof}

\begin{proposition}[Minimum size]\label{prop:minsize}
Every diamond-generating subset of \(\E(\B_X)\) has cardinality at least
\(|X|\), and this bound is attained.
\end{proposition}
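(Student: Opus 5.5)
The proof has two parts: a lower bound coming from \cref{lem:directions}, and an explicit generator of size \(|X|\).

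\emph{Lower bound.} Let \(S\) diamond-generate \(\B_X\). For each \(i\in X\), the full edge set \(\E(\B_X)\) contains edges of direction \(i\), for instance \(e_i(\varnothing)\). By \cref{lem:directions}, \(S\) must therefore contain at least one edge of direction \(i\). Edges of distinct directions are distinct, so \(|S|\ge |X|\).

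\emph{Attainment.} I would exhibit a chain-type generator, namely \(S=\{e_i(\varnothing):i\in X\}\), all edges out of the bottom element. This is \(S_G\) for the edgeless graph \(G\), which is a cograph, so it should generate. I would prove this directly by induction on the rank \(|A|\), showing that every edge \(e_i(A)\) lies in the closure.

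The base case \(|A|=0\) holds by definition of \(S\). For the induction step, suppose \(|A|=k\ge 1\) and \(i\notin A\). Pick \(j\in A\) and set \(A'=A\setminus\{j\}\). By the inductive hypothesis, \(e_i(A')\) and \(e_j(A')\) both lie in the closure, since \(|A'|=k-1\). These two edges form the lower half \(L(A';i,j)\), so diamond completion adds the upper half \(U(A';i,j)\). That half contains \(e_i(A'\cup\{j\})=e_i(A)\). Since every cover edge has this form, the closure is all of \(\E(\B_X)\).

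I expect no real obstacle. The only point needing care is checking that the inductive step always has a legitimate diamond available: \(i\) and \(j\) must be distinct and both lie outside \(A'\). This holds because \(i\notin A\), \(j\in A\), and \(j\notin A'\).
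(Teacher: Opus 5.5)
Your proof is correct and is essentially the paper's. The lower bound comes from \cref{lem:directions}, and attainment uses the bottom star $\botstar=\{e_i(\varnothing):i\in X\}$ with induction on $|A|$, completing the lower half $L(A\setminus\{j\};i,j)$ to obtain $e_i(A)$; the only differences are cosmetic (the paper also notes that the dual top star $\topstar$ generates, and ``chain-type'' is a misnomer for what is a star at the bottom element).
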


\begin{proof}
By \cref{lem:directions}, a generating set must contain at least one edge in
each of the \(|X|\) directions.

For sharpness, consider the bottom star
\[
\botstar=\{e_i(\varnothing):i\in X\}.
\]
We prove by induction on \(r\) that every edge \(e_i(A)\) with \(|A|\le r\)
lies in \(\dcl{X}(\botstar)\). The case \(r=0\) is the definition of
\(\botstar\). Suppose the assertion holds through \(r\), and let
\(|A|=r+1\). Choose \(j\in A\) and set \(B=A\setminus\{j\}\). Since
\(i\notin A\), the two edges \(e_i(B)\) and \(e_j(B)\) are available by the
induction hypothesis. They form the lower half of a diamond, whose upper half
contains
\[
e_i(B\cup\{j\})=e_i(A).
\]
Thus all cover edges are generated.

Dually, the top star
\[
\topstar=\{e_i(X\setminus\{i\}):i\in X\}
\]
also generates. Hence the minimum cardinality is \(|X|\).
\end{proof}

Let \(S\) now be a diamond-generating set of cardinality \(|X|\). It contains
exactly one edge in each direction, so there are uniquely determined sets
\(A_i\subseteq X\setminus\{i\}\) such that
\[
S=\{e_i(A_i):i\in X\}.
\]

\begin{lemma}[Symmetry lemma]\label{lem:symmetry}
For distinct \(i,j\in X\),
\[
j\in A_i\quad\Longleftrightarrow\quad i\in A_j.
\]
\end{lemma}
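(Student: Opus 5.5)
Fix distinct \(i,j\in X\) and apply the coordinate-projection lemma (\cref{lem:projection}) with \(Y=\{i,j\}\). Since \(S\) diamond-generates \(\B_X\), the projected set \(\pi_Y(S)\) diamond-generates \(\B_Y\). Here \(\B_Y\) is a single square with four edges, and only two edges of \(S\) survive the projection, namely \(e_i(A_i)\) and \(e_j(A_j)\). They project to
\[
\pi_Y(e_i(A_i))=e_i(A_i\cap\{j\}),\qquad \pi_Y(e_j(A_j))=e_j(A_j\cap\{i\}).
\]
So \(\pi_Y(S)\) consists of one edge in direction \(i\) and one edge in direction \(j\), and these two edges must generate all four edges of the square.

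Next we determine when two such edges generate the square. In \(\B_Y\) there is only one diamond, with \(A=\varnothing\). Its lower half is \(\{e_i(\varnothing),e_j(\varnothing)\}\) and its upper half is \(\{e_i(\{j\}),e_j(\{i\})\}\). A two-element set containing no full half triggers no completion, so it is already closed. A proper subset of the square is therefore not generating. Hence \(\pi_Y(S)\) generates exactly when it equals the lower half or the upper half. This happens exactly when either both \(j\notin A_i\) and \(i\notin A_j\), or both \(j\in A_i\) and \(i\in A_j\). That is precisely the claimed equivalence.

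The only point needing care is that the projection is well defined on this two-edge set. Other edges of \(S\) have directions outside \(Y\) and are discarded. The two retained edges have distinct directions, so they cannot collapse to a single edge, and \(\pi_Y(S)\) has exactly two elements. There is no real obstacle beyond checking this case analysis; \cref{lem:projection} carries the argument.
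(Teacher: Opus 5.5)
Your proposal is correct and follows essentially the same route as the paper. You project to \(Y=\{i,j\}\) via \cref{lem:projection}, note that exactly one edge in each direction survives, and observe that such a pair generates the single square of \(\B_Y\) only when it is the lower half or the upper half. That condition is exactly \(j\in A_i\Leftrightarrow i\in A_j\).
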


\begin{proof}
Project to the two-coordinate Boolean lattice on \(Y=\{i,j\}\). By
\cref{lem:projection}, \(\pi_Y(S)\) generates \(\B_Y\). All seed edges in
directions outside \(Y\) are discarded, so \(\pi_Y(S)\) consists of exactly
one edge in direction \(i\) and one edge in direction \(j\). More precisely,
\[
\pi_Y(e_i(A_i))=
\begin{cases}
 e_i(\varnothing),&j\notin A_i,\\
 e_i(\{j\}),&j\in A_i,
\end{cases}
\]
and analogously for the direction-\(j\) edge.

The lattice \(\B_Y\) has a single diamond. A pair containing one edge in each
direction generates that diamond exactly when the two edges form its lower
half or its upper half. If one is lower and the other is upper, no diamond
completion is available and the closure does not expand. Therefore the two
projected edges must both be lower or both be upper, which is precisely
\[
j\in A_i\quad\Longleftrightarrow\quad i\in A_j.
\]
\end{proof}

\begin{corollary}[Graph encoding]\label{cor:encoding}
Every diamond-generating set of cardinality \(|X|\) is equal to \(S_G\) for a
unique graph \(G\) on \(X\).
\end{corollary}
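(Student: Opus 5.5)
We take a diamond-generating set \(S\) of cardinality \(|X|\) and write it, as in the discussion preceding \cref{lem:symmetry}, as \(S=\{e_i(A_i):i\in X\}\). The sets \(A_i\subseteq X\setminus\{i\}\) are uniquely determined because, by \cref{lem:directions} and the cardinality count, \(S\) contains exactly one edge in each direction.

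We then define a graph \(G\) on \(X\) by declaring \(\{i,j\}\) to be an edge exactly when \(j\in A_i\). By \cref{lem:symmetry}, \(j\in A_i\) if and only if \(i\in A_j\), so this relation is symmetric. It is also irreflexive, since \(i\notin A_i\). Hence \(G\) is a well-defined finite simple graph. By construction \(N_G(i)=\{j: j\in A_i\}=A_i\) for every \(i\), and therefore
\[
S=\{e_i(A_i):i\in X\}=\{N_G(i)\to N_G(i)\cup\{i\}:i\in X\}=S_G .
\]

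For uniqueness, suppose \(S_G=S_H\) for graphs \(G,H\) on \(X\). Each of these sets contains exactly one edge in each direction \(i\), namely the one with lower endpoint \(N_G(i)\), respectively \(N_H(i)\). Comparing the edges of direction \(i\) gives \(N_G(i)=N_H(i)\) for all \(i\). A simple graph is determined by its neighborhoods, so \(G=H\).

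There is no real obstacle in this argument. All of the substance lies in \cref{lem:symmetry}. The only point requiring care is that the edge of direction \(i\) in \(S_G\) determines its lower endpoint \(N_G(i)\), which is immediate since \(e_i(A)\) determines \(A\).
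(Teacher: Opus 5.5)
Your proposal is correct and follows the paper's proof essentially verbatim. You define adjacency by \(j\in A_i\), use \cref{lem:symmetry} for symmetry and \(i\notin A_i\) for irreflexivity, read off \(N_G(i)=A_i\) to get \(S=S_G\), and deduce uniqueness by comparing direction-\(i\) edges; that last step just makes explicit the paper's remark that the neighborhoods determine every adjacency.
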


\begin{proof}
Define adjacency for distinct \(i,j\in X\) by
\[
ij\in \E(G)\quad\Longleftrightarrow\quad j\in A_i.
\]
The relation is symmetric by \cref{lem:symmetry}, and it has no loops because
\(i\notin A_i\). Hence it defines a simple graph. Its open neighborhoods are
\(N_G(i)=A_i\), so \(S=S_G\). The neighborhoods determine every adjacency,
which gives uniqueness.
\end{proof}

\begin{example}\label{ex:stars}
For the empty graph on \(X\), every neighborhood is empty and \(S_G=\botstar\).
For the complete graph on \(X\), every neighborhood is \(X\setminus\{i\}\)
and \(S_G=\topstar\).
\end{example}

\section{Cographs and twins}

The join \(G_1\vee G_2\) is obtained from the disjoint union of \(G_1\) and
\(G_2\) by adding every edge between their vertex sets. A \emph{cograph} is a
graph obtained from one-vertex graphs by repeated disjoint union and join. This
is equivalent to the standard definition as a graph with no induced copy of
the four-vertex path \(P_4\) \cite{CorneilLerchsBurlingham1981,Seinsche1974}.

Two distinct vertices \(x,y\) are \emph{false twins} if they are nonadjacent
and have the same neighbors outside \(\{x,y\}\). They are \emph{true twins}
if they are adjacent and have the same neighbors outside \(\{x,y\}\). Either
type will be called a pair of \emph{twins}.

We use the following standard characterization; a proof is included for
completeness.

\begin{lemma}[Hereditary twin characterization]\label{lem:twin-characterization}
A finite graph \(G\) is a cograph if and only if every induced subgraph of
\(G\) with at least two vertices contains a pair of twins.
\end{lemma}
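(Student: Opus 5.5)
We prove the two directions separately. The definition of cograph used here is the one built from single vertices by disjoint union and join. We use two standard facts about it: the class is closed under taking induced subgraphs, and the complement of a cograph is a cograph. The second fact holds because complementation swaps disjoint union and join.

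\emph{Forward direction.} Let \(G\) be a cograph. Since every induced subgraph of \(G\) is again a cograph, it suffices to show that every cograph on at least two vertices has a pair of twins. We argue by induction on the construction.

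\begin{itemize}
\item If \(G=G_1\cup G_2\) is a disjoint union with \(|V(G_1)|\ge 2\), the induction hypothesis gives twins \(x,y\) in \(G_1\). They remain twins in \(G\), because no vertex of \(G_2\) is adjacent to either of them.
\item If \(G=G_1\vee G_2\) is a join with \(|V(G_1)|\ge 2\), twins \(x,y\) in \(G_1\) remain twins in \(G\), because every vertex of \(G_2\) is adjacent to both of them.
\item In the remaining case both parts are single vertices. Then \(G\) is either \(\overline{K_2}\) or \(K_2\), and its two vertices are false or true twins respectively.
\end{itemize}

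\emph{Reverse direction.} Suppose every induced subgraph of \(G\) with at least two vertices contains a pair of twins. We induct on \(|V(G)|\); a single vertex is a cograph. For \(|V(G)|\ge2\), pick twins \(x,y\). The graph \(G-y\) inherits the hypothesis, so it is a cograph by induction. It remains to show that adding a twin to a cograph yields a cograph.

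We prove this by structural induction on a cograph expression for \(H=G-y\), with \(y\) a false or true twin of \(x\in V(H)\).

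\begin{itemize}
\item If \(H\) is the single vertex \(x\), then \(G\) is \(x\cup y\) or \(x\vee y\).
\item If \(H=H_1\cup H_2\) with \(x\in H_1\), then \(N(y)\setminus\{x\}=N(x)\setminus\{y\}\subseteq V(H_1)\). Hence \(G=(H_1+y)\cup H_2\), and \(H_1+y\) (with \(y\) still a twin of \(x\)) is a cograph by induction.
\item If \(H=H_1\vee H_2\) with \(x\in H_1\), then \(y\) is adjacent to all of \(H_2\). Hence \(G=(H_1+y)\vee H_2\), and again \(H_1+y\) is a cograph by induction.
\end{itemize}

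The only step needing care is this last one: checking that \(y\)'s adjacencies decompose compatibly with the top-level operation. It follows directly from the twin condition, since \(y\) agrees with \(x\) on every vertex outside \(\{x,y\}\).

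Alternatively, one could avoid the constructive definition and use the \(P_4\)-free characterization \cite{Seinsche1974}. In that version, an induced \(P_4\) has no twins, which handles one direction. For the other direction one would need the fact that a \(P_4\)-free graph on at least two vertices is disconnected or has disconnected complement. That route relies on an external theorem, so we follow the inductive argument above.
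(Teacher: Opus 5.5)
Your proof is correct and takes essentially the paper's approach. In the forward direction you both use the top-level union/join decomposition; the paper gets heredity by intersecting an induced subgraph \(H\) with the parts \(G_a\), whereas you cite closure of cographs under induced subgraphs (a short structural induction). In the reverse direction, your structural induction is exactly the verification behind the paper's step of replacing the leaf \(x\) in a construction tree of \(G-y\) by \(K_1\sqcup K_1\) or \(K_1\vee K_1\), and the complementation fact you list is never used.
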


\begin{proof}
Suppose first that \(G\) is a cograph. We prove the hereditary statement by
induction on \(|V(G)|\). It is immediate for a one-vertex graph. A nontrivial
cograph has a construction whose final operation expresses it either as a
disjoint union
\[
G=G_1\sqcup\cdots\sqcup G_k
\]
or as a join
\[
G=G_1\vee\cdots\vee G_k,
\]
where \(k\ge2\) and each \(G_a\) is a smaller cograph.

Let \(H\) be an induced subgraph of \(G\) with at least two vertices, and put
\(H_a=H[V(H)\cap V(G_a)]\). If some \(H_a\) has at least two vertices, then it
has twins by induction; those vertices remain twins in \(H\), because every
vertex outside \(G_a\) is adjacent to both of them in the join case and to
neither of them in the disjoint-union case. If every nonempty \(H_a\) is a
singleton, choose vertices in two distinct parts. They are false twins in the
disjoint-union case and true twins in the join case.

Conversely, suppose every induced subgraph of \(G\) with at least two vertices
has twins. We induct on \(|V(G)|\). Choose twins \(x,y\) in \(G\) and let
\(H=G-y\). The graph \(H\) has the same hereditary twin property and is
therefore a cograph by induction. Take a construction tree for \(H\). Replace
the leaf corresponding to \(x\) by \(K_1\sqcup K_1\) if \(x,y\) are false
twins, and by \(K_1\vee K_1\) if they are true twins. Leaving all subsequent
union and join operations unchanged gives exactly \(G\), so \(G\) is a
cograph.
\end{proof}

\section{Cographs generate}

\begin{proposition}\label{prop:cographs-generate}
If \(G\) is a cograph on \(X\), then \(S_G\) diamond-generates \(\B_X\).
\end{proposition}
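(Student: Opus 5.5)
We argue by induction on \(|X|\), following the cograph construction. A one-vertex cograph gives \(S_G=\{e_x(\varnothing)\}\), which is the entire edge set of \(\B_{\{x\}}\). For the inductive step, \(G\) is either a disjoint union \(G_1\sqcup G_2\) or a join \(G_1\vee G_2\) of smaller cographs on a partition \(X=X_1\sqcup X_2\). The complement of a cograph is a cograph, and the antipodal map \(A\mapsto X\setminus A\) on \(\B_X\) reverses edges. So we expect the join case to reduce to the union case by duality: \(e_i(N_G(i))\) maps to \(e_i(X\setminus(N_G(i)\cup\{i\}))=e_i(N_{\bar G}(i))\), and diamond closure is preserved because lower and upper halves swap. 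Hence it suffices to treat \(G=G_1\sqcup G_2\), using only the induction hypothesis for \(G_1\) and \(G_2\). This includes the case where one of them is a single vertex or a join, which is covered by induction.

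The key tool is an embedding lemma. For \(Y\subseteq X\) and \(C\subseteq X\setminus Y\), the interval \([C,C\cup Y]\) is a copy of \(\B_Y\) via \(B\mapsto B\cup C\), and diamonds of \(\B_Y\) map to diamonds of \(\B_X\). Therefore, if \(T\) generates \(\B_Y\), then \(\{e_i(B\cup C):e_i(B)\in T\}\) generates all edges of \(\B_X\) inside that interval. In the union case, \(N_G(i)=N_{G_1}(i)\subseteq X_1\) for \(i\in X_1\). Applying the lemma with \(Y=X_1\), \(C=\varnothing\) shows that the seeds in directions \(X_1\) generate every edge \(e_i(B)\) with \(i\in X_1\) and \(B\subseteq X_1\). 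Symmetrically, we obtain every \(e_j(B)\) with \(j\in X_2\) and \(B\subseteq X_2\). In particular, all edges \(e_i(\varnothing)\) for \(i\in X\) are in the closure. The bottom star \(\botstar\) of \(\B_X\) is therefore contained in the closure, and by the proof of \cref{prop:minsize} the closure is everything.

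The main obstacle is the duality step for the join case, and we will check it carefully. Define \(\phi(e_i(A))=e_i(X\setminus(A\cup\{i\}))\). This is a bijection on \(\E(\B_X)\). It sends the lower half \(L(A;i,j)\) to the upper half \(U(X\setminus(A\cup\{i,j\});i,j)\), and conversely. Hence \(\phi\) commutes with each closure round, so \(\dcl{X}(\phi(S))=\phi(\dcl{X}(S))\), and \(\phi(S_G)=S_{\bar G}\). Since \(\overline{G_1\vee G_2}=\bar G_1\sqcup\bar G_2\) with \(\bar G_1,\bar G_2\) cographs on fewer vertices, the union case applied to \(\bar G\) gives that \(S_{\bar G}\) generates. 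Applying \(\phi\), \(S_G\) generates as well. To keep the induction clean, we will state the induction hypothesis for all cographs on fewer vertices, which is closed under complementation.
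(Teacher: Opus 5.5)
Your proof is correct. Its union case matches the paper's argument. The join case takes a different route.

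\textbf{The paper's join case.} The paper handles the join directly. It embeds \(S_{G_a}\) in the top face \(F_a^1=\{(X\setminus X_a)\cup A: A\subseteq X_a\}\). This is exactly your interval embedding with \(C=X\setminus X_a\), a case your lemma already covers but you never use. The paper then generates the top star \(\topstar\) and invokes the fact that \(\topstar\) generates.

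\textbf{Your join case.} You instead prove that the antipodal involution \(\phi\) swaps lower and upper halves, and therefore commutes with every closure round. You also check that \(\phi(S_G)=S_{\bar G}\). You then reduce the join case to the union case using \(\overline{G_1\vee G_2}=\bar G_1\sqcup\bar G_2\) and the closure of cographs under complementation.

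\textbf{What each route buys.} The paper's route is more direct: it needs neither complementation nor a global symmetry of \(\E(\B_X)\). Yours costs an extra lemma, but it gives two things:
\begin{itemize}
\item It makes precise the paper's one-word justification ``Dually, the top star also generates'' in \cref{prop:minsize}, since \(\phi(\botstar)=\topstar\).
\item It shows that \(S_G\) generates if and only if \(S_{\bar G}\) does, a complementation symmetry of the whole classification.
\end{itemize}
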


\begin{proof}
We induct on \(|X|\). The assertion is immediate when \(|X|=1\). Suppose
\(|X|\ge2\). By the recursive definition of a cograph, there is a partition
\(X=X_1\sqcup\cdots\sqcup X_k\), with \(k\ge2\), such that each induced graph
\(G_a=G[X_a]\) is a smaller cograph and \(G\) is either their disjoint union or
their join.

First suppose \(G=G_1\sqcup\cdots\sqcup G_k\). For each \(a\), consider the
Boolean face
\[
F_a^0=\{A\subseteq X:A\subseteq X_a\},
\]
which is naturally isomorphic to \(\B_{X_a}\). If \(i\in X_a\), then
\(N_G(i)=N_{G_a}(i)\), so the seeds of \(S_G\) in directions belonging to
\(X_a\) are precisely the embedded copy of \(S_{G_a}\) in this face. Every
diamond of the face is also a diamond of \(\B_X\). By induction, these seeds
generate every cover edge in \(F_a^0\), and in particular they generate
\(e_i(\varnothing)\) for every \(i\in X_a\). Applying this to all parts
produces the full bottom star \(\botstar\), which generates \(\B_X\) by
\cref{prop:minsize}.

Now suppose \(G=G_1\vee\cdots\vee G_k\). For each \(a\), consider the face
\[
F_a^1=\{(X\setminus X_a)\cup A:A\subseteq X_a\},
\]
again naturally isomorphic to \(\B_{X_a}\). For \(i\in X_a\),
\[
N_G(i)=(X\setminus X_a)\cup N_{G_a}(i),
\]
so the corresponding seeds form the embedded copy of \(S_{G_a}\) in
\(F_a^1\). Induction generates the top star of this face. Its direction-\(i\)
edge is
\[
e_i(X\setminus\{i\}):X\setminus\{i\}\longrightarrow X.
\]
Taking all parts yields the full top star \(\topstar\), which also generates
\(\B_X\).
\end{proof}

\section{The converse}

\begin{lemma}[Induced-subgraph inheritance]\label{lem:induced}
Let \(G\) be a graph on \(X\), and let \(Y\subseteq X\). If \(S_G\)
diamond-generates \(\B_X\), then \(S_{G[Y]}\) diamond-generates \(\B_Y\).
\end{lemma}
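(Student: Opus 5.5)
The plan is to reduce the statement directly to the coordinate projection \cref{lem:projection}. Applied with this \(Y\), it says that if \(S_G\) diamond-generates \(\B_X\), then \(\pi_Y(S_G)\) diamond-generates \(\B_Y\). It therefore suffices to compute \(\pi_Y(S_G)\) and check that it equals \(S_{G[Y]}\).

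The computation goes as follows. Seeds \(e_i(N_G(i))\) with \(i\notin Y\) are discarded by the definition of \(\pi_Y\). For \(i\in Y\), the seed projects to
\[
\pi_Y\bigl(e_i(N_G(i))\bigr)=e_i\bigl(N_G(i)\cap Y\bigr).
\]
The open neighborhood of \(i\) in the induced subgraph is exactly \(N_{G[Y]}(i)=N_G(i)\cap Y\). Hence
\[
\pi_Y(S_G)=\{e_i(N_{G[Y]}(i)):i\in Y\}=S_{G[Y]},
\]
and the lemma follows. The one degenerate case is \(Y=\varnothing\). There \(\B_Y\) has no cover edges, so the statement holds trivially; alternatively, one can read the lemma as assuming \(Y\) is nonempty.

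I do not expect a genuine obstacle. The main point to check is that the projected edges are legitimate cover edges of \(\B_Y\), which holds because \(i\notin N_G(i)\cap Y\). I also need the identity \(N_{G[Y]}(i)=N_G(i)\cap Y\), and this is immediate from the definition of an induced subgraph.
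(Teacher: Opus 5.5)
Your proposal is correct and follows the paper's proof: you show \(\pi_Y(S_G)=S_{G[Y]}\) using \(N_{G[Y]}(i)=N_G(i)\cap Y\), and then apply \cref{lem:projection}. Your remarks on the case \(Y=\varnothing\) and on the projected edges being genuine cover edges of \(\B_Y\) are harmless additions that the paper leaves implicit.
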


\begin{proof}
For every \(i\in Y\),
\[
N_{G[Y]}(i)=N_G(i)\cap Y.
\]
Hence \(\pi_Y(S_G)=S_{G[Y]}\). The result follows immediately from
\cref{lem:projection}.
\end{proof}

\begin{lemma}[Twin-trigger equivalence]\label{lem:twin-trigger}
For distinct vertices \(i,j\) of a graph \(G\), the two seed edges
\(e_i(N_G(i))\) and \(e_j(N_G(j))\) form the lower half of a diamond if and
only if \(i,j\) are false twins. They form the upper half of a diamond if and
only if \(i,j\) are true twins.
\end{lemma}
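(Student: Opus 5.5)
The plan is to unwind the definitions of the lower and upper halves and compare them directly with the twin conditions. Write \(N_i=N_G(i)\) and \(N_j=N_G(j)\). The key observation is that a pair consisting of one edge in direction \(i\) and one in direction \(j\) determines at most one candidate diamond. The pair \(\{e_i(B),e_j(C)\}\) equals \(L(A;i,j)\) exactly when \(B=C=A\) and \(i,j\notin A\). It equals \(U(A;i,j)\) exactly when \(B=A\cup\{j\}\), \(C=A\cup\{i\}\), and \(i,j\notin A\). Both identifications are read off from the displayed definitions, using that an edge \(e_i(A)\) determines its direction \(i\) and its lower endpoint \(A\).

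For the lower half, the seeds form \(L(A;i,j)\) for some \(A\) if and only if \(N_i=N_j\); the conditions \(i\notin N_i\) and \(j\notin N_j\) hold automatically. I will show that \(N_i=N_j\) is equivalent to \(i,j\) being false twins.
\begin{itemize}
  \item If \(N_i=N_j\), then \(j\notin N_j=N_i\), so \(i\) and \(j\) are nonadjacent. Intersecting with \(X\setminus\{i,j\}\) then gives equal outside neighborhoods.
  \item Conversely, if \(i,j\) are false twins, then neither neighborhood meets \(\{i,j\}\). Since the neighborhoods agree outside \(\{i,j\}\), they are equal.
\end{itemize}

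For the upper half, the seeds form \(U(A;i,j)\) if and only if there is \(A\subseteq X\setminus\{i,j\}\) with \(N_i=A\cup\{j\}\) and \(N_j=A\cup\{i\}\). I will show this is equivalent to \(i,j\) being true twins.
\begin{itemize}
  \item If such an \(A\) exists, then \(j\in N_i\), so \(i\) and \(j\) are adjacent. Moreover \(N_i\setminus\{j\}=A=N_j\setminus\{i\}\), so \(i,j\) have the same neighbors outside \(\{i,j\}\); hence they are true twins.
  \item Conversely, if \(i,j\) are true twins, set \(A=N_i\setminus\{j\}=N_j\setminus\{i\}\). This set avoids both \(i\) and \(j\), and the two required identities hold.
\end{itemize}

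No step is a real obstacle. The only point needing care is the first observation, that membership in a half pins down \(A\) uniquely from the edge data. This is what justifies the phrase ``form the lower half of a diamond'' without any existential ambiguity.
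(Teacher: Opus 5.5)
Your proof is correct and follows the paper's argument. Both reduce ``lower half'' and ``upper half'' to set identities on the neighborhoods and compare these with the definitions of false and true twins. Your upper-half condition, an explicit $A\subseteq X\setminus\{i,j\}$ with $N_G(i)=A\cup\{j\}$ and $N_G(j)=A\cup\{i\}$, is an equivalent restatement of the paper's condition $N_G(i)\cup\{i\}=N_G(j)\cup\{j\}$.
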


\begin{proof}
The seed edges form a lower half exactly when their lower endpoints agree:
\[
N_G(i)=N_G(j).
\]
This equality forces \(i\) and \(j\) to be nonadjacent, because otherwise
\(j\in N_G(i)\) but \(j\notin N_G(j)\). It also says that their neighborhoods
outside \(\{i,j\}\) agree. Thus they are false twins. The converse is
immediate from the definition of false twins.

The seed edges form an upper half exactly when
\[
N_G(i)\cup\{i\}=N_G(j)\cup\{j\}.
\]
The membership of \(i\) in the left-hand side forces \(i\in N_G(j)\), and
similarly \(j\in N_G(i)\); hence the vertices are adjacent. After removing
\(i\) and \(j\), the equality gives identical outside neighborhoods, so they
are true twins. Conversely, true twins satisfy the displayed equality.
\end{proof}

\begin{lemma}[A generating neighborhood seed has twins]\label{lem:first-diamond}
If \(|X|\ge2\) and \(S_G\) diamond-generates \(\B_X\), then \(G\) contains a
pair of twins.
\end{lemma}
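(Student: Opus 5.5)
The plan is to look at the very first closure round. Since \(|X|\ge2\), the seed \(S_G\) has exactly \(|X|\) edges, while \(\E(\B_X)\) has \(|X|2^{|X|-1}>|X|\) edges. So \(S_G\neq\E(\B_X)\), and generation forces \(S^{(1)}\neq S^{(0)}=S_G\). If no diamond had a triggering half inside \(S^{(0)}\), then by \cref{def:closure} we would get \(S^{(1)}=S^{(0)}\). By induction every later stage would also equal \(S^{(0)}\), so the closure would be \(S_G\) itself, a contradiction. Hence some diamond, given by a set \(A\) and distinct \(i,j\notin A\), has either \(L(A;i,j)\subseteq S_G\) or \(U(A;i,j)\subseteq S_G\).

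Next we identify which seed edges form that triggering half. Each half contains one edge in direction \(i\) and one in direction \(j\), and the two directions are distinct. Since \(S_G\) contains exactly one edge in each direction, the two edges of the half must be the seeds \(e_i(N_G(i))\) and \(e_j(N_G(j))\). So these two seed edges form the lower or the upper half of a diamond.

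Finally, \cref{lem:twin-trigger} says this happens exactly when \(i,j\) are false twins (lower half) or true twins (upper half). Either way \(G\) contains a pair of twins.

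The only step needing care is the first one: a nonexpanding first round means the seed is already closed. That holds because each stage \(S^{(t+1)}\) depends only on \(S^{(t)}\). The count \(|X|<|X|2^{|X|-1}\) for \(|X|\ge2\) then rules out \(S_G\) being closed.
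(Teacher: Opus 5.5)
Your proposal is correct and follows essentially the same argument as the paper. You count edges to force the first closure round to expand, extract a diamond whose lower or upper half lies in \(S_G\), identify its two edges as the neighborhood seeds in directions \(i\) and \(j\), and conclude with \cref{lem:twin-trigger}; your extra justification (exactly one seed per direction) just makes explicit a step the paper states briefly.
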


\begin{proof}
The set \(S_G\) has \(|X|\) edges, whereas
\[
|\E(\B_X)|=|X|2^{|X|-1}>|X|.
\]
Thus the closure sequence starting from \(S_G\) must add an edge. If the first
closure round added nothing, then the sequence would already be stable and no
later round could add anything. Therefore some diamond has its lower or upper
half contained in the original seed set \(S_G\). Its two triggering edges have
distinct directions and are the corresponding two neighborhood seeds. By
\cref{lem:twin-trigger}, their vertices are twins.
\end{proof}

\begin{proposition}\label{prop:converse}
If \(S_G\) diamond-generates \(\B_X\), then \(G\) is a cograph.
\end{proposition}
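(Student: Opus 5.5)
The plan is to verify the hereditary twin condition of \cref{lem:twin-characterization} directly, rather than inducting on the number of vertices. By that lemma, it suffices to show that every induced subgraph \(H=G[Y]\) with \(|Y|\ge2\) contains a pair of twins.

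Fix such a \(Y\subseteq X\). First, apply \cref{lem:induced} to the hypothesis that \(S_G\) diamond-generates \(\B_X\). This shows that \(S_{G[Y]}\) diamond-generates \(\B_Y\). Since \(|Y|\ge 2\), \cref{lem:first-diamond} applies to the graph \(G[Y]\) on the vertex set \(Y\) and produces a pair of twins in \(G[Y]\). Because \(Y\) was arbitrary, every induced subgraph of \(G\) with at least two vertices has twins. The converse direction of \cref{lem:twin-characterization} then yields that \(G\) is a cograph.

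One small point needs checking: the twins supplied by \cref{lem:first-diamond} must be twins in the induced graph \(G[Y]\), not merely in \(G\). This holds because that lemma is applied verbatim to the graph \(G[Y]\) with seed set \(S_{G[Y]}\), so the twin relation it produces is computed within \(Y\). Equivalently, the identity \(N_{G[Y]}(i)=N_G(i)\cap Y\), used in \cref{lem:induced}, shows that the neighborhood seeds are the correct ones for \(G[Y]\).

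I expect no real obstacle, since the substantive work has already been done in \cref{lem:projection}, \cref{lem:twin-trigger} and \cref{lem:first-diamond}. The only care needed is to cite the converse direction of the characterization and to keep track of the ambient vertex set. As an alternative, one could induct on \(|X|\) by deleting one twin and invoking \cref{lem:induced}. That route would need the reconstruction argument from the proof of \cref{lem:twin-characterization} in any case, so the direct hereditary argument is cleaner.
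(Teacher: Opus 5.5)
Your proposal is correct and matches the paper's proof: for every induced subgraph \(H=G[Y]\) with \(|Y|\ge2\), \cref{lem:induced} shows that \(S_H\) diamond-generates \(\B_Y\), \cref{lem:first-diamond} then yields twins in \(H\), and the converse direction of \cref{lem:twin-characterization} concludes that \(G\) is a cograph. Your check that the twins are computed within \(G[Y]\) is handled in the paper the same way, by applying \cref{lem:first-diamond} directly to \(H\).
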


\begin{proof}
Let \(H\) be any induced subgraph of \(G\) with at least two vertices. By
\cref{lem:induced}, the neighborhood seed \(S_H\) diamond-generates the Boolean
lattice on \(V(H)\). By \cref{lem:first-diamond}, the graph \(H\) contains a
pair of twins. Thus every induced subgraph of \(G\) with at least two vertices
has twins, and \cref{lem:twin-characterization} implies that \(G\) is a
cograph.
\end{proof}

\begin{example}[The first obstruction]\label{ex:p4}
The path \(P_4\) has no pair of twins. By \cref{lem:twin-trigger}, no two edges
of \(S_{P_4}\) form the lower or upper half of a diamond. Therefore its first
closure round adds nothing, so
\[
\dcl{V(P_4)}(S_{P_4})=S_{P_4}.
\]
In particular, \(S_{P_4}\) is not diamond-generating. This is the smallest
example obstructing the cograph condition.
\end{example}

\section{Proof of the classification and consequences}

\begin{proof}[Proof of \cref{thm:main}]
Part \textup{(i)} is \cref{prop:minsize}. For part \textup{(ii)},
\cref{prop:cographs-generate} proves that cographs generate, and
\cref{prop:converse} proves the converse.

For part \textup{(iii)}, let \(S\) be a diamond-generating set of cardinality
\(|X|\). By \cref{cor:encoding}, there is a unique graph \(G\) on \(X\) with
\(S=S_G\). Part \textup{(ii)} implies that \(G\) is a cograph. Conversely,
every cograph \(G\) gives a generating set \(S_G\) with exactly one edge in
each direction and therefore exactly \(|X|\) edges.
\end{proof}

\begin{corollary}\label{cor:counting}
The map \(G\mapsto S_G\) is a bijection from the labeled cographs on \([n]\)
to the minimum-cardinality diamond-generating subsets of \(\E(\B_n)\).
Consequently, these two classes have the same cardinality.
\end{corollary}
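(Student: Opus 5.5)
The plan is to verify that the map \(G\mapsto S_G\) is well defined into the stated target, injective, and surjective. Each of these is essentially a restatement of \cref{thm:main} together with \cref{cor:encoding}, so the proof is short.

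\emph{Well defined.} Let \(G\) be a labeled cograph on \([n]\). By \cref{thm:main}(ii), \(S_G\) diamond-generates \(\B_n\). The set \(S_G\) has exactly one edge in each direction \(i\in[n]\), namely \(e_i(N_G(i))\), and edges of different directions are distinct. Hence \(|S_G|=n\). By \cref{prop:minsize}, \(n\) is the minimum cardinality of a generating set, so \(S_G\) is a minimum-cardinality diamond-generating subset.

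\emph{Injective.} Suppose \(S_G=S_{G'}\). Since each set has exactly one edge in direction \(i\), the lower endpoints in direction \(i\) must agree, so \(N_G(i)=N_{G'}(i)\) for every \(i\). The open neighborhoods determine the edge set, so \(G=G'\). This is the uniqueness clause of \cref{cor:encoding}.

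\emph{Surjective.} Let \(S\) be any minimum-cardinality diamond-generating subset. By \cref{prop:minsize} it has cardinality \(n\), and \cref{thm:main}(iii) gives a unique cograph \(G\) with \(S=S_G\). The equality of cardinalities then follows from the finite bijection.

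None of these steps is a real obstacle. The only point needing care is confirming that "minimum" here means cardinality exactly \(n\), which is supplied by the attainment part of \cref{prop:minsize}.
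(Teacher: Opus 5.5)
Your proposal is correct and follows the same route as the paper. Well-definedness and surjectivity come from \cref{thm:main}, with \cref{prop:minsize} fixing the minimum at $n$; injectivity holds because the lower endpoints of the unique seed edge in each direction recover the open neighborhoods, and these determine $G$.
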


\begin{proof}
Surjectivity and the fact that every image is a minimum-cardinality generator
are contained in \cref{thm:main}. Injectivity follows because the open
neighborhoods recover every adjacency of \(G\).
\end{proof}

\section{Further questions}

The classification concerns cardinality, not the number of synchronous closure
rounds. The cograph construction suggests several natural refinements.

\begin{question}
For a cograph \(G\), how can the first round at which
\(\dcl{X}(S_G)=\E(\B_X)\) be read from a cotree of \(G\)? Which cographs
minimize or maximize this closure time among graphs on a fixed number of
vertices?
\end{question}

\begin{question}
What are the inclusion-minimal diamond-generating sets that have more than
\(|X|\) edges?
\end{question}

\begin{question}
Which analogous minimum-generator classifications hold for cover graphs of
other finite distributive or modular lattices?
\end{question}

\section*{Acknowledgments}

The author thanks Vladimir Retakh for helpful discussions and for directing him
to the diamond-closure framework developed with Michael Saks.

\end{document}